\newtheorem{theorem}{Theorem}[section]
\newtheorem{lemma}[theorem]{Lemma}
\theoremstyle{definition}
\newtheorem{definition}[theorem]{Definition}
\newtheorem{example}[theorem]{Example}
\numberwithin{equation}{section}
\newcommand\R {{\mathbb R}}
\newcommand\CR{\mathrm{CR}}
\newcommand\LCD{\mathrm{LCD}}
\author{Mikhail G. Katz}\address{M. Katz, Department of Mathematics,
Bar Ilan University, Ramat Gan 52900 Israel}
\email{katzmik@macs.biu.ac.il}
\begin{document}

\thispagestyle{empty}


\title{Convexity, critical points, and connectivity radius}

\begin{abstract}
We study the level sets of the distance function from a boundary point
of a convex set in Euclidean space.  We provide a lower bound for the
range of connectivity of the level sets, in terms of the critical
points of the distance function in the sense of
Grove--Shiohama--Gromov--Cheeger.
\end{abstract}

\maketitle

\section{Introduction}

Critical point theory for non-smooth functions such as the distance
function from a point in a Riemannian manifold was developed by Grove
and Shiohama~\cite{GS}, Gromov \cite{Gr81}, and Cheeger \cite{Ch91}
(see also \cite{Ka89}).

We will exploit this notion of criticality to derive an optimal lower
bound for the connectivity radius at a boundary point of a convex set
in Euclidean space in terms of the least critical distance.  The
connectedness of the level sets is a question posed at MO.%
\footnote{See \url{https://mathoverflow.net/q/227562}}
We apply the techniques along the lines of
Grove--Shiohama--Gromov--Cheeger, to develop a kind of Morse theory to
provide an optimal answer.  Here the answer is optimal in the sense
that it is easy to give nontrivial cases where the bound is optimal.

\section{Connectivity radius and critical distance}

Let~$K\subseteq\R^n$ be a closed convex set of dimension at least~$2$.
Consider a boundary point~$O\in\partial K$.  Let~$S_r(O)\subseteq\R^n$
be the sphere of radius~$r>0$ centered at~$O$.

\begin{definition}
The \emph{connectivity radius}~$\CR(O)$ of~$O\in\partial K$ is the
supremum of all~$\epsilon$ such that for all~$r<\epsilon$ the
intersection~$S_r(O)\cap K$ is connected.
\end{definition}

\begin{definition}
A point~$P\in \partial K\setminus\{O\}$ is \emph{$O$-critical}, or
\emph{critical} for short, if one of the following equivalent
conditions is satisfied:
\begin{enumerate}
\item
$\langle O-P, X-P\rangle \geq 0$ for all~$X\in K$;
\item
the affine hyperplane through~$P$ orthogonal to~$OP$ is a supporting
hyperplane for~$K$.
\end{enumerate}
\end{definition}

\begin{example} 
If~$K\subseteq\R^2$ is a disk and~$O\in\partial K$, then the
only~$O$-critical point is the antipodal point of~$O$.
\end{example}

\begin{definition}
The least critical distance~$\LCD(O)$ is the infimum of~$|OP|$ where
the infimum is taken over all~$O$-critical points~$P\in \partial K$.
\end{definition}

\section{The results}

\begin{theorem}
\label{t17}
For each~$O\in\partial K$ we have~$\CR(O)\geq \LCD(O)$.
\end{theorem}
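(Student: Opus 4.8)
The plan is to set up a Morse theory for the Euclidean distance function $f(X)=|OX|$ restricted to $K$, and to show that, in the absence of critical points below the level $\LCD(O)$, all the level sets $L_r:=S_r(O)\cap K$ with $0<r<\LCD(O)$ are homeomorphic, and connected. The first step is to reconcile the two notions of criticality. At an interior point $X\in\operatorname{int}K$ with $X\ne O$ the function $f$ is smooth with nonvanishing gradient $(X-O)/|X-O|$, so one may always increase $f$ by moving radially outward. At a boundary point $P\ne O$ the admissible first-order directions are those in the tangent cone $T_PK=\overline{\{\lambda(X-P):X\in K,\ \lambda\ge 0\}}$, and $f$ fails to increase to first order in every admissible direction exactly when $\langle P-O,\,X-P\rangle\le 0$ for all $X\in K$, that is, precisely when $P$ is $O$-critical in the sense of the Definition. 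Thus the $O$-critical points are exactly the points of $K\setminus\{O\}$ at which $f|_K$ cannot be increased, which is the Grove--Shiohama--Gromov--Cheeger notion of a critical point of a distance function.

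Fix $r<\LCD(O)$ and choose $\epsilon$ with $r<\epsilon<\LCD(O)$. By definition of $\LCD(O)$, every $O$-critical point lies at distance $\ge\LCD(O)>\epsilon$, so the compact region $K\cap\{0<f\le\epsilon\}$ contains no critical point of $f|_K$. Next I would construct a gradient-like vector field $V$ on this region: around each point $P$, noncriticality supplies a fixed vector $w$ with $\langle P-O,w\rangle>0$ that points into $K$ (for boundary points one takes $w=Q-P$ with $Q\in\operatorname{int}K$ and $\langle P-O,Q-P\rangle>0$, which is possible since $P$ is not critical and $\operatorname{int}K\ne\varnothing$); patching these local choices with a partition of unity yields a continuous field $V$ with $\langle\nabla f,V\rangle>0$ throughout, pointing into $\operatorname{int}K$ along $\partial K$. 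Convexity of $K$ guarantees that the forward flow of $V$ stays in $K$, and after reparametrizing so that $f$ increases at unit speed, the Grove--Shiohama isotopy argument produces homeomorphisms $L_a\cong L_b$ for all $0<a\le b\le\epsilon$. Consequently all level sets $L_\rho$, $0<\rho\le\epsilon$, are homeomorphic; in particular $L_r$ is homeomorphic to $L_\rho$ for arbitrarily small $\rho$.

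It then remains to treat the base case: $L_\rho$ is connected for all sufficiently small $\rho>0$. Here I would use that $L_\rho$ is homeomorphic, via $X\mapsto (X-O)/\rho$, to the spherical set $\Sigma_\rho=\{u\in S^{n-1}:O+\rho u\in K\}=S^{n-1}\cap\tfrac1\rho(K-O)$, and that these sets increase to $S^{n-1}\cap T_OK$ as $\rho\to0$, where $T_OK$ denotes the tangent cone of $K$ at $O$. Since $T_OK$ is a convex cone and, because $O\in\partial K$, is contained in a closed half-space, its trace $S^{n-1}\cap T_OK$ on the sphere is geodesically convex, hence connected; the content of the base case is that for small $\rho$ the set $\Sigma_\rho$ already inherits this connectivity from the convexity of $K$, with no interior pinching possible. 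Combining the base case with the homeomorphisms of the previous step shows that $L_r$ is connected for every $r<\LCD(O)$, which is exactly $\CR(O)\ge\LCD(O)$.

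The main obstacle I anticipate is the construction of the gradient-like flow so that it simultaneously increases $f$ and respects the boundary $\partial K$: one must verify that the inward-pointing field can be chosen continuously across the stratified boundary of the convex set, and that its flow does not escape $K$, the latter being where convexity is essential. A secondary delicate point is the rigorous base case, namely ruling out that $\Sigma_\rho$ is disconnected for arbitrarily small $\rho$; the examples of a strip, a half-disk, and a parabolic region suggest that convexity forbids the interior pinches that would be required, but this must be argued from the geometry of $K$ near $O$ rather than by a scale-invariant reduction, since rescaling $K$ about $O$ rescales $\LCD(O)$ as well.
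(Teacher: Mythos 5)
Your identification of the $O$-critical points with the Grove--Shiohama--Gromov--Cheeger critical points of $f=|O\cdot|$ restricted to $K$, and your gradient-like field between critical-point-free levels, match the paper's Lemma~\ref{l19} and are fine in spirit (modulo the usual care needed to extract an actual homeomorphism, rather than merely a continuous forward-flow map, from an inward-pointing field on a set with boundary). The genuine gap is your base case. You need $L_\rho=S_\rho(O)\cap K$ to be connected for all sufficiently small $\rho>0$, and your proposed route --- rescale to $\Sigma_\rho=S^{n-1}\cap\tfrac1\rho(K-O)$ and invoke spherical convexity of the trace of the tangent cone --- only controls the limit $\bigcup_{\rho>0}\Sigma_\rho$. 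The sets $\Sigma_\rho$ increase as $\rho$ decreases, and an increasing family with connected union can perfectly well have every member disconnected. Convexity of $K$ gives you only that two directions $u,v\in\Sigma_\rho$ at angle $\theta$ are joined by a geodesic arc lying in $\Sigma_{\rho\cos(\theta/2)}$ (apply convexity to the triangle with vertices $O$, $O+\rho u$, $O+\rho v$): they merge at a strictly \emph{lower} level, not at level $\rho$. So the base case is essentially the assertion $\CR(O)>0$, which is not a soft statement; its natural proof already runs through the critical-point machinery together with $\LCD(O)>0$. You flag this point as ``delicate'' but supply no argument, and the difficulty of the theorem is concentrated exactly there.

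The paper avoids needing a base case by a mountain-pass argument: assuming $K_r$ is disconnected, it follows two components down to the infimal level $\beta$ at which they merge, shows $\beta>0$ (using $\dim K\ge2$ to choose a path in $K$ between the two radial rays avoiding $O$ and pulling it in radially), shows by a compactness argument that the two traced points do lie in one component of $K_\beta$, and then observes that if $K_\beta$ contained no critical point the outward flow of Lemma~\ref{l19} would keep the two components merged just above level $\beta$, contradicting minimality of $\beta$. Hence $K_\beta$ carries a critical point and $r\geq\beta\geq\LCD(O)$. If you wish to keep your ``homeomorphic levels plus base case'' architecture, you must replace the tangent-cone heuristic by an argument of this first-merging-level type; as written, the proposal does not prove the theorem.
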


\begin{example}
Let~$K\subseteq\R^2$ be an acute-angled triangle.  If~$O$ a vertex of
$K$, then~$\CR(O)$ is the length of the altitude from~$O$ to the
opposite side, and the foot of the altitude is an~$O$-critical point.
If~$O$ contained in an open side of~$K$, then~$\CR(O)$ is the smaller
of the two distances from~$O$ to the remaining two sides, and the foot
of each perpendicular is a critical point.
\end{example}

The lower bound provided by the theorem is nontrivial due to the
following lemma.

\begin{lemma}
Let~$K\subseteq\R^n$ be a convex set, and let~$O\in\partial K$.  Then
$\LCD(O)>0$.
\end{lemma}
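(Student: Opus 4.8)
The plan is to argue by contradiction, extracting a ``critical direction'' at $O$ from a hypothetical sequence of critical points collapsing onto $O$ and showing that no such direction can exist. Suppose $\LCD(O)=0$. Then, by definition of the infimum, there is a sequence of $O$-critical points $P_k\in\partial K$ with $r_k:=|OP_k|\to 0$. Translate so that $O$ is the origin, and set $u_k:=P_k/r_k$, a unit vector. Unwinding condition (1) of the definition of criticality with $O=0$ gives $\langle -P_k,X-P_k\rangle\ge 0$ for all $X\in K$, that is $\langle P_k,X\rangle\le|P_k|^2=r_k^2$; dividing by $r_k$ this reads
\[
\langle u_k,X\rangle\le r_k\qquad\text{for all }X\in K.
\]

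First I would pass to a convergent subsequence. Since the $u_k$ lie on the unit sphere, which is compact, after relabelling we may assume $u_k\to u$ with $|u|=1$. Fixing any $X\in K$ and letting $k\to\infty$ in the displayed inequality (the left side converges to $\langle u,X\rangle$ and the right side to $0$) yields $\langle u,X\rangle\le 0$ for every $X\in K$, and hence, by continuity of the inner product, for every $X\in\overline{K}$ as well. In geometric terms, the hyperplane through $O$ orthogonal to $u$ supports $K$, with $K$ contained in the closed half-space $\{\langle u,\cdot\rangle\le 0\}$.

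The contradiction now comes from feeding the points $P_m$ themselves back into this inequality. Since each $P_m\in\partial K\subseteq\overline{K}$, the limit inequality gives $\langle u,P_m\rangle\le 0$ for every $m$. On the other hand $\langle u,P_m\rangle=r_m\langle u,u_m\rangle$, and since $u_m\to u$ we have $\langle u,u_m\rangle\to|u|^2=1$; thus $\langle u,u_m\rangle>0$, and therefore $\langle u,P_m\rangle=r_m\langle u,u_m\rangle>0$, for all sufficiently large $m$. This contradicts $\langle u,P_m\rangle\le 0$, completing the argument.

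The step I expect to require the most care is the passage to the limit, where one must keep the two roles of the index separate: the inequality $\langle u,X\rangle\le 0$ is obtained by freezing a point $X=P_m$ and sending the criticality index $k\to\infty$, whereas the positivity $\langle u,P_m\rangle>0$ is then extracted by sending the frozen index $m\to\infty$. The remaining points are routine, namely that the infimum being $0$ really supplies such a sequence $P_k$, and that the inequality $\langle u,\cdot\rangle\le 0$ extends from $K$ to $\overline{K}$ by continuity (needed only so that the boundary points $P_m$ may legitimately be substituted). Note that the argument uses neither smoothness nor the hypothesis $\dim K\ge 2$; it rests only on compactness of the sphere of directions together with the defining inequality of criticality.
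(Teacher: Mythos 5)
Your proof is correct, but it takes a genuinely different route from the paper's. You rewrite the criticality condition $\langle O-P_k,\,X-P_k\rangle\ge 0$ as the half-space inequality $\langle u_k,X\rangle\le r_k$ for the unit directions $u_k=P_k/r_k$, extract a convergent subsequence $u_k\to u$, and pass to the limit to obtain a supporting hyperplane for $K$ through $O$ orthogonal to $u$; the contradiction is that the points $P_m$ themselves eventually lie strictly on the forbidden side of that hyperplane. The paper instead runs a packing argument in the style of Gromov: two critical points subtending an angle at most $\pi/3$ at $O$ have distance ratio at most $2$, so from a sequence of critical points tending to $O$ one extracts a subsequence whose distances drop by a factor greater than $2$, forcing all pairwise angles to exceed $\pi/3$ --- impossible, since the unit sphere admits only finitely many such directions. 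Your argument is shorter and more elementary, relying only on one convergent subsequence and the linearity of the criticality condition; what the paper's angle-comparison version buys is robustness, since it is precisely the form of the argument that transfers to Riemannian manifolds via Toponogov's theorem, which is the generalization sketched in the notes appended to the paper. Two minor housekeeping points: you should remark that if there are no critical points at all then the infimum is $+\infty$ and there is nothing to prove, and (as you already note) the substitution $X=P_m$ is legitimate because the standing hypothesis of the paper is that $K$ is closed, so $\partial K\subseteq K$.
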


\begin{proof}
If~$P_1$ and~$P_2$ are critical points with angle~$\measuredangle
P_1OP_2\leq\frac{\pi}{3}$ then by the Pythagorean Theorem we
have~$\frac{|OP_1|}{|OP_2|}\leq2$.  

Suppose~$(P_i)$ is a sequence of critical points tending to~$O$.
Passing to a subsequence if necessary, we can assume
that~$\frac{|OP_i|}{|OP_{i+1}|}>2$ for each~$i$.
Then~$\measuredangle{}P_i OP_j>\frac{\pi}{3}$ whenever~$i<j$.  But the
sphere of directions at~$O$ can only contain finitely many directions
such that all pairwise angles are greater than~$\frac{\pi}{3}$.  The
contradiction shows that the critical points must be bounded away
from~$O$.
\end{proof}

We first illustrate the theorem by considering the case~$n=2$.

\begin{lemma}
\label{l18}
Let~$K\subseteq\R^2$ be compact and convex.  Let~$O\in\partial K$ and
suppose~$S_r(O)\cap K$ has more than one connected component.  Then
there is an~$O$-critical point~$C$ with~$|OC|\leq r$.
\end{lemma}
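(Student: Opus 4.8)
The plan is to locate the critical point as the point of~$\partial K$ lying ``across'' a gap in the level set that is closest to~$O$. Introduce the set of admissible directions $A_r=\{u\in S^1:O+ru\in K\}$. Since $u\mapsto O+ru$ is continuous and $K$ is closed, $A_r$ is a closed subset of the unit circle, and $u\mapsto O+ru$ is a homeomorphism of $A_r$ onto $S_r(O)\cap K$. Thus the hypothesis says $A_r$ has at least two connected components, so there is a maximal open arc $G\subseteq S^1$ (a gap) disjoint from $A_r$ whose two endpoints lie in $A_r$. Writing $\rho(u)=\sup\{t\ge0:O+tu\in K\}$, we have $\rho(u)<r$ for $u\in G$, while $\rho\ge r$ at the two endpoints of $G$.

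First I would check that the candidate point will be distinct from~$O$, which is where convexity at~$O$ enters. Because $O\in\partial K$ and $K$ is convex, the directions $u$ with $\rho(u)>0$ (those pointing into $K$) form a single arc $T$ of angular length at most~$\pi$, namely the directions of the tangent cone of $K$ at~$O$. Consequently $A_r\subseteq\overline T$, and the two endpoints of~$G$, having $\rho\ge r>0$, lie in the interior of~$T$; hence the whole gap $G$ is an interior subarc of~$T$, and $\rho(u)\in(0,r)$ for every $u\in G$. Therefore the exit points $E(u)=O+\rho(u)u$, for $u\in G$, trace a connected piece of $\partial K$ that lies in the open disk $D_r(O)$ and is bounded away from~$O$.

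Next I would let $C$ be a point of (the closure of) this exit arc nearest to~$O$, minimizing $|OX|$; by compactness the minimum $d=|OC|$ is attained. Since $\rho<r$ throughout the open gap while the endpoints of~$G$ sit at distance $r$, we get $0<d<r$, and $C$ is realized over an interior direction of~$G$. In particular $C\in\partial K\setminus\{O\}$ and $|OC|=d\le r$, as required by the statement; geometrically the circle $S_d(O)$ touches $\partial K$ at~$C$ from the inside, and the segment $OC\subseteq K$ places~$K$ on the side of~$O$. It remains only to verify that~$C$ is $O$-critical.

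The main point — and the step I expect to be the real obstacle — is to establish criticality, i.e.\ condition~(2): the line $\ell$ through~$C$ perpendicular to~$OC$ supports~$K$, with $K$ contained in the closed half-plane $\{X:\langle O-C,X-C\rangle\ge0\}$. The mechanism is that $C$ is a local minimum of $X\mapsto|OX|$ along the convex curve $\partial K$, so the first-order minimality condition pins the (possibly one-sided) tangent directions of $\partial K$ at~$C$ to be orthogonal to $OC$; equivalently, the outward unit vector $u^\ast=(C-O)/d$ is a supporting normal of~$K$ at~$C$. Because a supporting line of a convex set at a boundary point is automatically a global supporting line, this local information yields $\langle X-C,u^\ast\rangle\le0$, hence $\langle O-C,X-C\rangle\ge0$, for every $X\in K$, which is condition~(1). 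The delicate issues to treat carefully are the possible non-smoothness of $\partial K$ at~$C$, which must be handled through the one-sided tangents of the convex boundary and the associated normal cone rather than a naive derivative computation, and the confirmation that the minimality of~$d$ is genuinely a first-order interior condition; both are secured by the facts established above that $0<d<r$ and that $C$ lies over an interior direction of the gap~$G$.
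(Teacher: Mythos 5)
Your proof is, at its core, the same as the paper's: locate the piece of $\partial K$ lying across a gap in the level set, take its point $C$ nearest to $O$, and deduce criticality from first variation plus convexity; the difference is only bookkeeping (the paper works with a connected component $\gamma$ of $\partial K\cap B_r(O)$ not containing $O$, you work with a gap $G$ in the direction set $A_r$ and the radial function $\rho$). There is, however, one step where your justification does not hold up, precisely at the point where the paper's setup gets the conclusion for free (a compact component $\gamma$ not containing $O$ is automatically at positive distance from $O$). You claim the endpoints of $G$ lie in the \emph{interior} of $T$, hence that the exit points are bounded away from $O$. The first claim is false in general: for $K$ the triangle with vertices $(\pm r,0)$ and $(0,h)$ with $0<h<r$, and $O$ the origin, one has $A_r=\{(1,0),(-1,0)\}$, and the endpoints of the gap are the endpoints of $\overline T$, not interior points. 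Moreover, even when $G\subseteq\mathrm{int}\,T$, pointwise positivity of $\rho$ on the open arc $G$ gives no uniform lower bound, so "bounded away from $O$" does not follow from what you wrote. The repair is a chord argument: the segment joining $P_1=O+ru_1$ and $P_2=O+ru_2$ over the endpoints of $G$ lies in $K$, every ray from $O$ in a direction of $G$ meets it, so $\rho\geq\mathrm{dist}(O,[P_1,P_2])$ on $G$; this is positive unless the gap subtends angle exactly $\pi$, a degenerate case handled using $\dim K=2$ (replace the chord by a triangle $P_1P_2Q$ with $Q$ an interior point of $K$). With that fix the rest goes through; for the final first-variation step, which you sketch at the same level of detail as the paper, note that the minimizer $C$ is automatically a smooth point of $\partial K$: at a corner, $O-C$ lies in the convex cone spanned by the two outward one-sided tangents $v^{\pm}$, so $\langle O-C,v^{+}\rangle$ and $\langle O-C,v^{-}\rangle$ cannot both be $\leq 0$, and hence a corner cannot be a local minimum of $|OX|$ on $\partial K$ at positive distance; at a smooth point the two inequalities force the tangent line to be orthogonal to $OC$, and it supports $K$ on the side containing $O$, which is condition~(1).
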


\begin{proof}
Let~$B_r(O)$ be the closed disk of radius~$r$.  The hypothesis of the
lemma implies that the curve~$\partial K\cap B_r(O)$ also has more
than one connected component.  Let~$\gamma\subseteq\partial K\cap
B_r(O)$ be a connected component not containing the point~$O$.  If
$\gamma$ is a single point then it is $O$-critical.  Otherwise,
let~$A,B\in \gamma$ be the endpoints of the curve~$\gamma$.
Clearly~$|AO|=|BO|=r$.  If~$\gamma\subseteq S_r(O)$ then each interior
point of $\gamma$ is $O$-critical, proving the bound in this case.
Thus we may assume that some points of $\gamma$ lie in the interior
of~$B_r(O)$.  Let~$C\in\gamma$ be the point at least distance~$|CO|$
from~$O$.  By convexity of~$K$ and first variation, the line
through~$C$ orthogonal to~$OC$ is a supporting line for~$K$.  Thus~$C$
is an~$O$-critical point and~$|OC|<r$ in this case.
\end{proof}

At every non-critical point, there is a tangent vector with positive
(outward) $O$-radial component and pointing toward the interior
of~$K$.  A standard partition of unity argument using the convexity of
the tangent cone of $K$ at boundary points yields the following lemma.

\begin{lemma}
\label{l19}
Let $K\subseteq\R^n$ be convex.  Let~$L= S_r(O) \cap K$ be a level set
not containing any critical points.  Then there exists a smooth vector
field along~$L$ with constant positive radial component and pointing
toward the interior of~$K$.
\end{lemma}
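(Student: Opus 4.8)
The plan is to build the desired field from local \emph{constant-target} fields supplied by non-criticality, and then to glue them together with a partition of unity, exploiting that both required properties—positive radial component and pointing into the interior—are preserved under convex combinations. Throughout I write $u(Q)=\frac{1}{r}(Q-O)$ for the outward unit radial field, which is smooth on a neighborhood of $L$ since $|Q-O|=r>0$ there.

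First I would extract a usable vector at each point. Fix $P\in L$. Since $P$ is not $O$-critical, condition (1) of the definition fails, so there is a point $X\in K$ with $\langle O-P,X-P\rangle<0$, equivalently $\langle X-P,u(P)\rangle>0$. After a small perturbation (using that $\{X:\langle X-P,u(P)\rangle>0\}$ is open and that $\mathrm{int}(K)$ is dense in $K$) I may take $X=X_P\in\mathrm{int}(K)$. The crucial observation is that the affine field $Q\mapsto X_P-Q$ behaves well uniformly in $Q$: for every $Q\in K$ the segment $[Q,X_P]$ lies in $K$ and meets the interior, so $X_P-Q$ points toward $\mathrm{int}(K)$; and the radial component $\langle X_P-Q,u(Q)\rangle$ is a continuous function of $Q$ that is positive at $Q=P$, hence positive on a neighborhood $U_P\cap L$.

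Next I would pass from local to global. The level set $L=S_r(O)\cap K$ is compact, being a closed subset of the sphere $S_r(O)$, so the cover $\{U_P\}_{P\in L}$ admits a finite subcover $U_{P_1},\dots,U_{P_m}$. Choosing a smooth partition of unity $\{\rho_i\}$ subordinate to it, I set
\[
V(Q)=\sum_{i=1}^m \rho_i(Q)\,\bigl(X_{P_i}-Q\bigr)=Y(Q)-Q,\qquad Y(Q)=\sum_{i=1}^m\rho_i(Q)\,X_{P_i}.
\]
Then $V$ is smooth along $L$. Since $Y(Q)$ is a convex combination of the interior points $X_{P_i}$, it lies in $\mathrm{int}(K)$, so $V(Q)=Y(Q)-Q$ points toward the interior. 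Its radial component $\langle V(Q),u(Q)\rangle=\sum_i\rho_i(Q)\langle X_{P_i}-Q,u(Q)\rangle$ is a nonnegative combination of terms that are positive wherever $\rho_i(Q)>0$, hence strictly positive. Finally, to make the radial component constant I divide by it: the function $f(Q)=\langle V(Q),u(Q)\rangle$ is smooth and positive on the compact set $L$, so $W(Q)=V(Q)/f(Q)$ is a smooth field with $\langle W,u\rangle\equiv1$, and positive rescaling preserves the direction, so $W$ still points toward the interior.

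The one point requiring care—the main obstacle—is reconciling smoothness with the two inequalities simultaneously. The idea that resolves it is to encode \emph{pointing into $K$} not as membership in the pointwise-varying tangent cone (which would obstruct a smooth selection) but as the affine expression $Y(Q)-Q$ with $Y(Q)\in K$; convexity of $K$ then guarantees that taking convex combinations of the target points preserves this property, while nonnegativity of the partition functions preserves radial positivity. This is precisely the convexity of the tangent cone invoked in the statement.
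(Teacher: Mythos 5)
Your proof is correct and carries out exactly the argument the paper only sketches (local vectors at non-critical points glued by a partition of unity using convexity), with the affine fields $Q\mapsto X_P-Q$ and the convex combination $Y(Q)=\sum_i\rho_i(Q)X_{P_i}$ making precise why convexity lets the gluing preserve both the radial positivity and the inward-pointing condition. The only cosmetic caveat is that when $K$ is not full-dimensional in $\R^n$ the word \emph{interior} should be read as relative interior, a point the paper itself elides.
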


We will exploit Lemma~\ref{l19} to prove our main theorem.

\begin{proof}[Proof of Theorem~$\ref{t17}$]
The case when~$K$ is 2-dimensional was treated in Lemma~\ref{l18}.  We
now treat the general case $n\geq2$.  Let 
\[
K_r = S_r(O)\cap K.
\]
Suppose~$K_r$ is not connected.  We will show that $r>\LCD(O)$.

Consider a pair of distinct connected components~$X,Y$ of $K_r$.  We
identify $O$ with the origin and choose rays~$\R^+ x$,~$\R^+ y$
meeting~$X$ and~$Y$ respectively.  Let~$\beta\leq r$ be the infimum of
radii~$s\leq r$ such that the connected components of the
points~$x_{s}=\R^+ x\,\cap\, K_{s}$ and~$y_{s}=\R^+ y\,\cap \, K_{s}$
are still distinct.  Since~$\dim K\geq2$, a path in $K$ connecting the
two rays can be chosen to avoid the point~$O$ and then pulled in
radially to the level containing a point of the path nearest $O$.
Hence~$\beta>0$.

Let us show that points $x_\beta$ and $y_\beta$ are in the same
connected component of $K_\beta$.  Suppose otherwise.  Then there are
disjoint open sets $U,V\subseteq\R^n$ such that $x_\beta\in U$,
$y_\beta\in V$, and $K_\beta\subseteq U\cup V$.  Let
$\epsilon_n=\frac{1}{n}$.  Since $K$ is star-shaped at $O$ and closed,
the intersections $K_{\beta-\epsilon_n}\cap U$ and
$K_{\beta-\epsilon_n}\cap V$ are still non-empty for sufficiently
large $n$.  By definition of $\beta$, the points
$x_{\beta-\epsilon_n}$ and $y_{\beta-\epsilon_n}$ are in the same
connected component of $K_{\beta-\epsilon_n}$.  Therefore there exists
a point $z_n\in K_{\beta-\epsilon_n}$ such that $z_n\not\in U\cup V$.
Passing to a subsequence if necessary we can assume that $(z_n)$
converges.  Let $z=\lim_{n\to\infty} z_n$.  By compactness of $K$, we
have $z\in K_\beta$.  On the other hand by construction $z\not\in
U\cup V$.  This contradicts the fact that $K_\beta\subseteq U\cup V$.
The contradiction proves that $K_\beta$ is connected.

If the connected level set $K_\beta$ did not contain any~$O$-critical
point, we could use the flow generated by the vector field of
Lemma~\ref{l19} to push it out into a level~$K_{r+\epsilon}$
for~$\epsilon>0$, contradicting the minimality of~$\beta$.  Hence
$K_\beta$ must contain a critical point, and thus $r>\beta\geq
\LCD(O)$, proving the theorem.
\end{proof}

\end{document}